\newcommand{\hide}[1]{}
\numberwithin{equation}{section}
\newcommand{\C}{\mathbb C}
\def\1#1{\overline{#1}}
\def\2#1{\widetilde{#1}}
\def\3#1{\widehat{#1}}
\def\4#1{\mathbb{#1}}
\def\5#1{\frak{#1}}
\def\6#1{{\mathcal{#1}}}
\newcommand{\mcite}[1]{\csname b@#1\endcsname}
\theoremstyle{theorem}
\newtheorem{satz}{Theorem}
\newtheorem{theorem}{Theorem}[section]
\newtheorem{lemma}[theorem]{Lemma}
\theoremstyle{definition}
\theoremstyle{remark}
\newtheorem{remark}{Remark}
\numberwithin{equation}{section}
\title[A Bernstein--type inequality and Carleson's embedding theorem]{A sharp Bernstein--type inequality and application to the\\[2mm] Carleson embedding theorem with matrix weights}
\author[D. Kraus]{Daniela Kraus}
\address{D. Kraus: Department of Mathematics, University of W\"urzburg, Emil Fischer Strasse 40, 97074, W\"urzburg, Germany.} \email{dakraus@mathematik.uni-wuerzburg.de}
\author[A.~Moucha]{Annika Moucha$^\S$}
\address{A. Moucha: Department of Mathematics, University of W\"urzburg, Emil Fischer Strasse 40, 97074, W\"urzburg, Germany.} \email{annika.moucha@mathematik.uni-wuerzburg.de}
\author[O. Roth]{Oliver Roth}
\address{O. Roth: Department of Mathematics, University of W\"urzburg, Emil Fischer Strasse 40, 97074, W\"urzburg, Germany.} \email{roth@mathematik.uni-wuerzburg.de}
\date{\today}
\subjclass[2010]{Primary 42B35, Secondary 30C10}
\keywords{Carleson embedding theorem, Bernstein--type inequality}
\thanks{$^\S\,$Partially supported by the Alexander von Humboldt Stiftung}
\long\def\REM#1{\relax}
\begin{document}
\selectlanguage{english}
\begin{abstract}
  We prove a sharp Bernstein--type inequality for complex polynomials which are positive and satisfy a polynomial growth condition on the positive real axis. This leads to an improved upper estimate in the recent work of Culiuc and Treil \cite{CuliucTreil}  on  the weighted martingale Carleson embedding theorem with matrix weights. In the scalar case this new upper bound is optimal.

\end{abstract}

\maketitle

\section{Result}

\begin{lemma} \label{lem:1}
  Let $n$ be a positive integer and $p: \C \to \C$  a polynomial such that $p(s) \ge 0$ for all $s \ge 0$ and
\begin{equation} \label{eq:bound}
  |p(s)| \le s^{-1} (1+s)^n\,  \qquad \text{ for all } s>0 \, .
  \end{equation}
  Then
\begin{equation} \label{eq:1}
|p(0)| \le n^2 \, ,
\end{equation}
with equality if
\begin{equation} \label{eq:chebyshev}
  p(s)=p_n(s):=\frac{1}{2} \frac{(s+1)^n}{s} \left(1-T_n\left(\frac{1-s}{1+s}\right)\right) \, .
  \end{equation}
Here, $T_n(x)=\cos(n \arccos x)$ is the $n$-th Chebyshev polynomial of the first kind.
\end{lemma}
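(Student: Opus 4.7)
The plan is to convert the estimate into a standard Bernstein/Markov type inequality on $[-1,1]$ via a Möbius change of variables, and to identify $p_n$ as the extremal via the known extremal of Markov's inequality, namely the Chebyshev polynomial $T_n$.

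First I would observe that the growth bound \eqref{eq:bound} forces $\deg p\le n-1$: as $s\to\infty$, $s^{-1}(1+s)^n\sim s^{n-1}$, so a polynomial dominated by this must have degree at most $n-1$. Next I introduce the conformal change of variable $x=\frac{1-s}{1+s}$, which maps $[0,\infty)$ bijectively onto $(-1,1]$, sending $s=0$ to $x=1$. Using the identities $1+s=\tfrac{2}{1+x}$ and $s=\tfrac{1-x}{1+x}$, I define
\begin{equation*}
R(x):=\frac{s\, p(s)}{(1+s)^n}\Bigg|_{s=\frac{1-x}{1+x}}=\frac{(1-x)(1+x)^{n-1}}{2^n}\, p\!\left(\frac{1-x}{1+x}\right).
\end{equation*}
Because $\deg p\le n-1$, the expression $(1+x)^{n-1}p\bigl(\tfrac{1-x}{1+x}\bigr)$ is a polynomial in $x$ of degree at most $n-1$, so $R$ is a (real) polynomial of degree at most $n$.

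I would then translate the hypotheses on $p$ into properties of $R$. The non-negativity $p(s)\ge 0$ on $[0,\infty)$ together with $1-x\ge 0$ on $[-1,1]$ gives $R(x)\ge 0$, and the growth condition $s\,p(s)\le(1+s)^n$ becomes $R(x)\le 1$ on $(-1,1]$; by continuity these extend to $[-1,1]$. Furthermore $R(1)=0$. A short computation (differentiate $R(x)=\tfrac{1-x}{2^n}\tilde p(x)$ with $\tilde p(x):=(1+x)^{n-1}p(\tfrac{1-x}{1+x})$ at $x=1$, where $\tilde p(1)=2^{n-1}p(0)$) yields
\begin{equation*}
R'(1)=-\,\frac{p(0)}{2},\qquad\text{so}\qquad |p(0)|=2|R'(1)|.
\end{equation*}

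The key step is now Markov's inequality. The polynomial $q(x):=2R(x)-1$ has degree at most $n$ and satisfies $|q|\le 1$ on $[-1,1]$, hence $|q'(1)|\le n^2$ by Markov, giving $|R'(1)|\le n^2/2$ and thus $|p(0)|\le n^2$. For the equality case I recall that Markov's inequality is attained at the endpoints only by $\pm T_n$; the constraint $R(1)=0$ then forces $q=-T_n$, i.e.\ $R(x)=\tfrac12(1-T_n(x))$. Reversing the change of variables (using $(1-x)(1+x)^{n-1}=\tfrac{2^n s}{(1+s)^n}$) recovers exactly
\begin{equation*}
p(s)=\frac{(1+s)^n}{s}\,R\!\left(\frac{1-s}{1+s}\right)=\frac12\,\frac{(s+1)^n}{s}\left(1-T_n\!\left(\frac{1-s}{1+s}\right)\right)=p_n(s),
\end{equation*}
and a short check (via $T_n(1)=1$ and the expansion $T_n(1-\varepsilon)=1-\tfrac12 n^2\varepsilon+O(\varepsilon^2)$ at $\varepsilon=2s$) confirms $p_n(0)=n^2$.

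I do not anticipate a serious obstacle. The only points requiring care are verifying that $R$ is genuinely a polynomial of degree $\le n$ (this relies on the a priori bound $\deg p\le n-1$) and carrying out the bookkeeping between $R'(1)$ and $p(0)$; once these are in place, the sharp factor $1/2$ in Markov's inequality (coming from the two-sided bound $0\le R\le 1$ rather than $|R|\le 1$) is exactly what produces the sharp constant $n^2$ rather than the naive $2n^2$.
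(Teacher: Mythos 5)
Your proof is correct, but it takes a genuinely different route from the paper. The paper transplants the problem to the unit circle: with $q(z)=zp(z)$ it forms the Laurent polynomial $f(z)=\frac{(1+z)^{2n}}{(4z)^n}\,q\left(-\left(\frac{1-z}{1+z}\right)^2\right)$, observes via the Koebe function that $f$ is real and nonnegative on $|z|=1$, applies the Fej\'er--Riesz theorem to write $f(z)=P(z)\overline{P(1/\overline{z})}$ with $P$ zero--free in the disk, and then invokes the Erd\H{o}s--Lax sharpening of Bernstein's inequality to get $|p(0)|=4|P'(1)|^2\le n^2$. You instead stay on the real line: the M\"obius substitution $x=\frac{1-s}{1+s}$ converts the hypotheses into $0\le R\le 1$ on $[-1,1]$ for a polynomial $R$ of degree $\le n$ with $R(1)=0$ and $|p(0)|=2|R'(1)|=|(2R-1)'(1)|$, and the classical Markov inequality applied to $2R-1$ finishes the proof; the normalization from the one--sided bound $0\le R\le1$ to $|2R-1|\le1$ is exactly what produces the sharp constant. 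Your route is more elementary (Markov in place of Fej\'er--Riesz plus Lax) and has a genuine payoff: the endpoint equality case of Markov's inequality (if $\deg q\le n$, $|q|\le 1$ on $[-1,1]$ and $|q'(1)|=n^2$, then $q=\pm T_n$; this follows from Lagrange interpolation at the Chebyshev nodes $\cos(k\pi/n)$) combined with $q(1)=2R(1)-1=-1$ forces $q=-T_n$ and hence $p=p_n$, i.e.\ uniqueness of the extremal --- precisely the point the paper leaves open in its final remark. Two small repairs: you should note that $p(s)\ge 0$ on $[0,\infty)$ forces $p$ to have real coefficients, so that $R$ is real and the two--sided bound is meaningful; and the Taylor expansion in your last check should read $T_n(1-\varepsilon)=1-n^2\varepsilon+O(\varepsilon^2)$, since $T_n'(1)=n^2$ --- with $\varepsilon=\frac{2s}{1+s}$ this gives $p_n(0)=n^2$, whereas the coefficient $\tfrac12 n^2$ you wrote would yield $p_n(0)=n^2/2$.
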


The source of motivation for Lemma \ref{lem:1} has been the recent work of Culiuc and Treil \cite{CuliucTreil} on the Carleson embedding theorem with matrix weights. In fact, Lemma 2.2 in \cite{CuliucTreil}, which they attribute to F.~Nazarov and M.~Sodin, provides the  (weaker) estimate 
\begin{equation} \label{eq:bct}
|p(0)| \le e^2 n^2 \, 
\end{equation}
for any polynom $p : \C \to \C$ satisfying (\ref{eq:bound}). Developing a sophisticated Bellman function technique and making use of estimate (\ref{eq:bct}),  Culiuc and Treil \cite{CuliucTreil}  proved  the following result (\cite[Theorem 1.2]{CuliucTreil}). We refer 
 to \cite{CuliucTreil} for the relevant terminology and notation.

 \begin{satz}[Carleson embedding theorem for matrix weights] \label{satz:a}
Let $W$ be a $d \times d$ matrix--valued measure and let $A_I$, $I \in \mathcal{D}$ be a sequence of
   positive semidefinite $d \times d$ matrices. Then the following are equivalent:

 \medskip
   \begin{itemize}
   \item[(i)] $\displaystyle \sum \limits_{I \in \mathcal{D}} \left| \left| A_I^{1/2} \langle( W^{1/2} f \rangle_I \right|\right|^2 \, |I| \le A ||f||^2_{L^2} \, .$\\[2mm]
   \item[(ii)] $\displaystyle \sum \limits_{I \in \mathcal{D}} \left| \left| A_I^{1/2} \langle W f \rangle_I \right|\right|^2 \, |I| \le A ||f||^2_{L^2} \, .$\\[2mm]
   \item[(iii)]  $\displaystyle \frac{1}{|I_0|} \sum \limits_{I \in \mathcal{D}, I \subset I_0} \langle W \rangle_I A_I \langle W\rangle_I \, |I| \le B \langle W \rangle_{I_0} $ for all $I_0 \in \mathcal{D}$.
   \end{itemize}
   Moreover, the best constants $A$ and $B$ satisfy $B \le A \le C B$, where $C=C(d)=4 e^2 d^2$.
   \end{satz}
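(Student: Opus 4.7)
The plan is to follow the Bellman function strategy that underlies matrix-weighted Carleson embeddings, adapting it to the three conditions at hand. For the easy half (ii)~$\Rightarrow$~(iii), I would test (ii) on the vector-valued function $f = \chi_{I_0} v$ for a fixed $v \in \mathbb{C}^d$: since $\langle Wf\rangle_I = \langle W\rangle_I v$ for every $I \subset I_0$ and $\|f\|^2_{L^2} = \|v\|^2\,|I_0|$, the inequality in (ii) reduces to $\sum_{I \subset I_0}\langle\langle W\rangle_I A_I \langle W\rangle_I v, v\rangle\,|I| \le A\|v\|^2|I_0|$, and polarizing in $v$ yields (iii) as a quadratic-form inequality with $B \le A$.

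For the equivalence (i)~$\Leftrightarrow$~(ii), I would argue by a polar-decomposition transfer: using $W = W^{1/2} \cdot W^{1/2}$ together with the duality between $L^2$ and $\ell^2(\mathcal{D})$, the two bilinear estimates are interchanged without loss of constant by testing against carefully chosen $L^2$ perturbations (essentially, the adjoint of $f \mapsto \{A_I^{1/2}\langle Wf\rangle_I\sqrt{|I|}\}_I$ recovers the form of (i) after multiplication by $W^{1/2}$).

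For the substantive direction (iii)~$\Rightarrow$~(ii), I would invoke the Bellman function method. The aim is to produce a concave function $\Phi = \Phi(x,F,M)$ of matrix-vector data (with $x = \langle W\rangle_I$, $F = \langle Wf\rangle_I$, and $M$ a Carleson bulk variable tied to (iii)) that satisfies a supersolution inequality
\[
\Phi(x_I, F_I, M_I) - \tfrac12\bigl[\Phi(x_{I_+}, F_{I_+}, M_{I_+}) + \Phi(x_{I_-}, F_{I_-}, M_{I_-})\bigr] \;\ge\; \|A_I^{1/2} F_I\|^2\,|I|
\]
on every dyadic split $I = I_- \cup I_+$, together with a uniform size bound $\Phi \le C(d)\cdot(\text{right-hand side of (ii)})$. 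Telescoping the supersolution increment over all $I \subset I_0$ then yields (ii) with $A \le C(d) B$.

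The hard part will be the explicit construction of $\Phi$ and, above all, extracting the correct size constant. After a spectral reduction to a single eigen-direction of the matrix parameters, the problem collapses to the polynomial extremal problem of bounding $|p(0)|$ subject to (\ref{eq:bound}), and the factor $e^2$ in $C(d) = 4e^2 d^2$ is precisely the factor appearing in the Nazarov--Sodin estimate (\ref{eq:bct}) used by Culiuc and Treil. Replacing (\ref{eq:bct}) by the sharp inequality (\ref{eq:1}) of Lemma~\ref{lem:1} would eliminate this factor and reduce the constant to $4d^2$, which is the improvement announced in the abstract.
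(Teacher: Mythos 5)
First, a point of orientation: the paper does not prove Theorem \ref{satz:a} at all. It is quoted verbatim from Culiuc and Treil \cite{CuliucTreil}, and the note's sole contribution to it is the observation that the sharp polynomial bound of Lemma \ref{lem:1} allows one to replace $C(d)=4e^2d^2$ by $4d^2$. So your outline is really being measured against the proof in \cite{CuliucTreil}, whose architecture (an easy testing direction, a Bellman function argument for the substantive direction, and the polynomial estimate \eqref{eq:bct} controlling the size of the Bellman function and hence the constant) you reproduce correctly at a high level, including the correct bookkeeping $e^2(2d)^2\rightsquigarrow(2d)^2$ when \eqref{eq:bct} is replaced by \eqref{eq:1}.

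As a proof, however, the proposal has genuine gaps. The central one: the Bellman function $\Phi$ is never constructed, and its existence with the concavity and size properties you postulate is precisely the hard content of \cite{CuliucTreil}; saying that ``after a spectral reduction to a single eigen-direction the problem collapses to the polynomial extremal problem'' is not a substitute for that construction --- the polynomial bound enters inside the definition and estimation of the Bellman candidate, for polynomials of degree $2d$ that are real and nonnegative on the positive half-axis, not through a generic diagonalization. Two smaller but real errors: (a) your test function for (ii) $\Rightarrow$ (iii) is wrong. With $f=\chi_{I_0}v$ the right-hand side of (ii) is $A\|v\|^2|I_0|$, whereas (iii) demands the bound $B\langle\langle W\rangle_{I_0}v,v\rangle\,|I_0|$; since $\langle W\rangle_{I_0}$ may be small or degenerate in the direction $v$, the former does not imply the latter. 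The correct choice is (an approximation of) $f=\chi_{I_0}W^{1/2}v$ tested in (i), which gives $\|f\|^2_{L^2}=\langle\langle W\rangle_{I_0}v,v\rangle\,|I_0|$ and hence $B\le A$. (b) The equivalence of (i) and (ii) is asserted rather than proved: the substitution $f\mapsto W^{1/2}f$ converts the unweighted norm $\|f\|^2_{L^2}$ into a weighted one, so the two conditions are not interchangeable by any routine ``polar-decomposition transfer''; in \cite{CuliucTreil} each of (i) and (ii) must be related to (iii). If your intent is only to explain where the constant $4e^2d^2$ comes from and why Lemma \ref{lem:1} improves it to $4d^2$, the outline is adequate; as a proof of Theorem \ref{satz:a} it is incomplete.
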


In fact, the proof of Theorem \ref{satz:a}  in \cite{CuliucTreil} requires the estimate (\ref{eq:bct}) only for polynomials $p : \C \to \C$ with degree $n=2d$, which  satisfy (\ref{eq:bound}) and  are \textit{real and positive on the positive real axis}. Therefore Lemma \ref{lem:1} implies
that one can take
$$ C(d)=4 d^2$$
instead of $C(d)=4 e^2 d^2$ in Theorem \ref{satz:a}. In the scalar case ($d=1$) this new upper bound produces the upper estimate $A \le 4B$, which is known to be optimal \cite[Theorem 3.3]{NTV}.

 \begin{remark}
   The method we use for the proof of Lemma \ref{lem:1} can also be used to improve   the bound (\ref{eq:bct}) given by \cite[Lemma 2.2]{CuliucTreil}, which holds for any polynomial $p: \C \to \C$ satisfying (\ref{eq:bound}). This leads to
   \begin{equation} \label{eq:b1}
    |p(0)| \le 2n^2-n  \, ,
  \end{equation}
see the next section for the proof. The estimate (\ref{eq:b1}) is presumably  not best possible. 
   \end{remark}

  \section{Proofs}

  The idea is to view both estimates, (\ref{eq:1}) and (\ref{eq:b1}), as Bernstein--type estimates. Recall that for a polynomial $h$ of degree $N$ the classical Bernstein inequality says that
  $$ \max \limits_{|z|=1} |h'(z)| \le N \cdot \max \limits_{|z|=1} |h(z)| \, .$$

\begin{proof}[Proof of Lemma \ref{lem:1}]
By assumption, $p: \C \to \C$ is a polynomial satisfying (\ref{eq:bound}) and
$p(s) \ge 0$ for all $s \ge 0$. Then $q(z):=z p(z)$ is polynomial of degree at most $n$ with $q(0)=0$,  $p(0)=q'(0)$, and  $q(s) \ge 0$ for all $s \ge 0$.
We define the auxiliary function
$$ f(z):=\frac{(1+z)^{2n}}{(4z)^n} q\left(- \left(\frac{1-z}{1+z} \right)^2 \right)=\sum \limits_{k=-n}^n a_k z^k \, ,$$
a Laurent polynomial of degree $\le n$. It is not difficult to see that the growth condition (\ref{eq:bound}) for $p$ implies the uniform bound
$$ |f(z)| \le 1 \quad \text{ for all } |z|=1 \, .$$
We also note that
$$ p(0)=q'(0)=-2 f''(1) \, ,$$
so our task is to find the best upper bound for $|f''(1)|$. 

\medskip

In order to find such an estimate, it turns out to be essential that 
the auxiliary function $f$ is  \textit{real and positive (i.e., $\ge 0$)} on $|z|=1$. To see this just note that
$$k(z)=\frac{z}{(1+z)^2}=\frac{1}{4} \left(1-\left( \frac{1-z}{1+z} \right)^2 \right) \, $$
is the Koebe function, familiar from the classical theory of univalent functions, which maps the unit circle $|z|=1$
onto the half--line $[1/4,+\infty)$. Hence, on $|z|=1$, $f(z)$ is the product of two real and positive functions.

\medskip

We are thus in a position to  apply the Fej\'er--Riesz theorem \cite{F} for the Laurent polynomial $f$. This gives us a complex polynomial $P$ of degree $\le n$ with no zeros in $|z|<1$ such that
$$ f(z)=P(z) \overline{P(1/\overline{z})} \, , \qquad z \in \C \setminus \{0\} \, .$$
Clearly, $|P(z)| \le 1$ for all $|z|=1$.  We can therefore  apply a sharpening of Bernstein's inequality due to P.~Lax \cite{Lax} (confirming an earlier conjecture of Erd\"os) which asserts that
$$ \max \limits_{|z|=1} |P'(z)| \le \frac{n}{2} \cdot \max \limits_{|z|=1} |P(z)| \le \frac{n}{2} \, .$$
In particular, $$|p(0)|=|q'(0)|=2 |f''(1)| =4 |P'(1)|^2 \le n^2\, ,$$ proving (\ref{eq:1}).
Clearly, the polynomial $P_n(z)=(z^{n}-1)/2$ has the property $|P'_n(1)|=n/2$, so $|f_n''(1)|=n^2/2$ for $f_n(z):=P_n(z) \overline{P_n(1/\overline{z})}$. It is easy to see that
$$ f_n(z)=\frac{(1+z)^{2n}}{(4z)^n} q_n\left(- \left(\frac{1-z}{1+z} \right)^2 \right)$$
for a polynomial $q_n$ of degree at most $n$ with $q_n(0)=0$, and  it is straightforward to check that $p_n(z):=q_n(z)/z$ has the form (\ref{eq:chebyshev}).
\end{proof}

\begin{proof}[Proof of (\ref{eq:b1})]
By assumption,  $p: \C \to \C$ is a polynomial satisfying (\ref{eq:bound}).
Then $q(z):=z p(z)$ is polynomial of degree at most $n$ with $q(0)=0$ and $p(0)=q'(0)$.
We define, closely following the proof of \cite[Lemma 2.2]{CuliucTreil},  the auxiliary function
$$ g(z):=\frac{(1+z)^{2n}}{4^n} q\left(- \left(\frac{1-z}{1+z} \right)^2 \right) \, ,$$
a polynomial of degree $N\le 2 n$.
As before,  the polynomial $g$  has the property that
$$ |g(z)| \le 1 \quad \text{ for all } |z|=1 \, .$$
Now note that
$$ p(0)=-2 g''(1) \, .$$
Hence, we could  apply the classical Bernstein inequality twice, first for $g'$ and then for $g''$, but this would result
in $$|p(0)|=2 |g''(1)| \le 2 N(N-1) \le 4n (2n-1) \, ,$$ which is not particularly good. 
However, as observed in \cite[Proof of Lemma 2.2]{CuliucTreil} we can assume without loss of generality that $g$ has no zeros in $|z|<1$. We can therefore  apply as above the inequality of Lax which leads to
$$ \max \limits_{|z|=1} |g'(z)| \le \frac{N}{2} \cdot \max \limits_{|z|=1} |g(z)| \le n \, .$$
This brings us in a position to apply  Corollary 14.2.8 in \cite{RS} for the polynomial $g'$ which has  degree $\le 2n-1$. Hence
$$ |g''(z)|+|(2n-1) g'(z)-z g''(z)| \le n (2n-1) \, , \qquad |z| \le 1 \, .$$
Taking  $z=1$ and noting that $g'(1)=n q(0)=0$, gives $2 |g''(1)| \le n (2n-1)$, as required.
\end{proof}

\section{Remark}

The polynomials $p$ which occur in the proof of Theorem \ref{satz:a} in \cite{CuliucTreil} are of the form
$$ p(s)=\sum \limits_{I \in \mathcal{D}} p_I(s) |I| \, ,$$
with  $p_I(s) \ge 0$ for all $s \ge 0$ and  each $p_I$  a polynomial of degree at most $2(d-1)$.
The extremal polynomial $p_{2d}$ in Lemma \ref{lem:1} has degree $2(d-1)$ and \textit{all its $2(d-1)$ zeros are on the positive real axis and are double zeros}. This implies that
$$ p(s)=p_{2d}(s) \quad \Longleftrightarrow \quad \forall_{I \in \mathcal{D}} \exists_{c(I) \ge 0} \, p_I|I|=c(I) p_{2d} \, .$$
Hence the extremal polynomial $p_{2d}$ of Lemma \ref{lem:1} shows up in the proof of Theorem \ref{satz:a} only if each $p_I$ is a multiple of $p_{2d}$. 

\medskip

It is not clear whether there exist extremal polynomials in Lemma \ref{lem:1} other than $p_n$.

\section{Acknowledgement} The authors would like to  thank Stefanie Petermichl for various helpful discussions and remarks.

\end{document}